\newcommand{\comments}[1]{}
\newtheorem{theorem}{Theorem}
\newtheorem*{theoremA}{Theorem A}
\newtheorem*{theoremB}{Theorem B}
\newtheorem*{theoremC}{Theorem C}
\def\C{\mathbb{C}}
\def\N{\mathbb{N}}
\def\Z{\mathbb{Z}}
\newcommand{\tef}{transcendental entire function}
\newcommand{\tmf}{transcendental meromorphic function}
\newcommand\qfor{\quad\text{for }}
\dedicatory{To Phil Rippon on the occasion of his 65th birthday}
\begin{document}
%
%%%%%%%%%%%%%
%
% TITLE
%
%%%%%%%%%%%%%
%
\title[On permutable meromorphic functions]{On permutable meromorphic functions}
\author{J. W. Osborne, \, D. J. Sixsmith}
\address{Department of Mathematics and Statistics \\
	 The Open University \\
   Walton Hall\\
   Milton Keynes MK7 6AA\\
   UK}
\email{john.osborne@open.ac.uk, david.sixsmith@open.ac.uk}
\subjclass{Primary 30D05; Secondary 30D30}
\thanks{The second author was supported by Engineering and Physical Sciences Research Council grant EP/J022160/1.}
%
%%%%%%%%%%%%%
%
% ABSTRACT
%
%%%%%%%%%%%%%
%
\begin{abstract}
We study the class $\mathcal{M}$ of functions meromorphic outside a countable closed set of essential singularities. We show that if a function in $\mathcal{M}$, with at least one essential singularity, permutes with a non-constant rational map $g$, then $g$ is a M\"{o}bius map that is not conjugate to an irrational rotation.  For a given function $ f \in\mathcal{M}$ which is not a M\"{o}bius map, we show that the set of functions in $\mathcal{M}$ that permute with $ f $ is countably infinite. Finally, we show that there exist \tmf s $f: \C \to \C$ such that, among functions meromorphic in the plane, $f$~permutes only with itself and with the identity map.
\end{abstract}
\maketitle
%
%%%%%%%%%%%%%
%
% INTRO
%
%%%%%%%%%%%%%
%
\section{Introduction}
If $ f $ and $ g $ are meromorphic functions, then, in general, $f \circ g$ is not meromorphic.  In view of this, we let $\mathcal{M}$ be the class of functions $f$ with the following property; there is a countable closed set $S(f) \subset \widehat{\C} := \C \cup \{\infty\}$ such that $f$ is meromorphic in $\widehat{\C}\setminus S(f)$, and $S(f)$ is the set of essential singularities of $f$. All functions that are meromorphic in $ \C $ lie in $\mathcal{M}$, which is closed under composition. The dynamics of functions in the class $\mathcal{M}$ was considered by Bolsch \cite{MR1423240,MR1703869}.

In this short paper we extend some results of Baker and Iyer on permutable entire functions to permutable functions in the class $\mathcal{M}$. Here, if $f$ and $g$ are functions defined on a subset of $\widehat{\C}$, then we say that these functions are \emph{permutable}, or that $ f $ \emph{permutes} with $ g $, if 
\begin{equation}
\label{eq:perm}
f \circ g = g \circ f.
\end{equation}

We take (\ref{eq:perm}) to mean that, for all $z \in \widehat{\C}$, either $f(g(z)) = g(f(z))$, or else both $f(g(z))$ and $ g(f(z))$ are undefined.

The case where $f$ is transcendental entire and $g$ is a polynomial was considered by Baker \cite{MR0097532} and independently by Iyer \cite{MR0105574}. They proved the following.
\begin{theoremA}
\label{theo:BI}
Suppose that $g$ is a non-constant polynomial. Then there exists a {\tef} $f$ that permutes with $g$ if and only if $g$ is an affine map of the form 
\begin{equation}
\label{defg}
g(z) = z e^{2\pi i m/n} + c, \qfor \text{some } m, n \in \Z, \ c \in \C.
\end{equation}
\end{theoremA}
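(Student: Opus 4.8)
The plan is to prove the two implications of the equivalence separately.

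\emph{Sufficiency.} Given $g(z)=ze^{2\pi i m/n}+c$: if $e^{2\pi i m/n}=1$, so that $g(z)=z+c$, take $f(z)=e^{z}$ when $c=0$ and $f(z)=z+e^{2\pi i z/c}$ when $c\neq 0$; in the latter case $f(g(z))=z+c+e^{2\pi i z/c}=g(f(z))$ because $e^{2\pi i(z+c)/c}=e^{2\pi i z/c}$. If $\lambda:=e^{2\pi i m/n}\neq 1$, then $g$ has the unique fixed point $\zeta=c/(1-\lambda)$, and conjugating by $z\mapsto z+\zeta$ reduces matters to $g(z)=\lambda z$ with $\lambda$ a root of unity, say $\lambda^{N}=1$; the \tef{} $f(z)=ze^{z^{N}}$ then satisfies $f(\lambda z)=\lambda z\,e^{\lambda^{N}z^{N}}=\lambda f(z)$. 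Since conjugation by an affine map preserves both the class of \tef s and the relation \eqref{eq:perm}, sufficiency follows.

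\emph{Necessity.} Suppose $f$ is a \tef{} permuting with the non-constant polynomial $g$, of degree $d$; the crucial claim is that $d=1$. Note first that $f$ maps fixed points of $g$ to fixed points of $g$ (if $g(w)=w$ then $g(f(w))=f(g(w))=f(w)$), and similarly periodic points to periodic points, so $f$ respects the dynamics of $g$ on its Julia set. Assume $d\geq 2$, and let $K(g)$ be the filled Julia set: for $z\in K(g)$ the orbit $\{g^{j}(z)\}$ remains in the compact set $K(g)$, so $\{f(g^{j}(z))\}=\{g^{j}(f(z))\}$ is bounded and hence $f(z)\in K(g)$; thus $f(K(g))\subseteq K(g)$, and $f^{-1}(K(g))$ is a closed, completely $g$-invariant, unbounded subset of $\C$. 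Equivalently, writing $G_{g}\geq 0$ for the Green's function of the basin $A_{g}(\infty)$ with pole at $\infty$ (continuous and subharmonic on $\C$, vanishing exactly on $K(g)$, with $G_{g}(g(z))=d\,G_{g}(z)$), the composition $u:=G_{g}\circ f$ is a non-negative continuous subharmonic function on $\C$ satisfying
\begin{equation}\label{eq:Greenfe}
u(g(z))=d\,u(z)\qquad (z\in\C),
\end{equation}
and $u\not\equiv 0$, since otherwise $f(\C)\subseteq K(g)$ would be bounded. From \eqref{eq:Greenfe} one sees $u$ is unbounded above (a subharmonic function bounded above on $\C$ is constant, and a non-zero constant contradicts \eqref{eq:Greenfe} for $d\geq 2$), so $u$, and hence $\log^{+}|f|$ up to a bounded error, obeys a relation governed by the $d$-fold branched self-cover $g$.

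The contradiction I would extract from this is the delicate point. One route is via the value distribution of the composite $h:=f\circ g=g\circ f$: one has $T(r,h)=(d+o(1))\,T(r,f)$ from the $g\circ f$ side and $T(r,h)=(1+o(1))\,T(\kappa r^{d},f)$ for a constant $\kappa>0$ from the $f\circ g$ side, and moreover the deficiencies of $f$ satisfy the averaging identity $d\,\delta(a,f)=\sum_{w\in g^{-1}(a)}\nu_{w}\,\delta(w,f)$, with $\nu_{w}$ the local degree of $g$ at $w$; iterating this identity shows $\delta$ vanishes off the post-critical set, and combined with \eqref{eq:Greenfe} (which propagates along the whole forward and backward orbit structure of $g$) this pins the growth of $f$ down along the relevant orbits to that of a polynomial, which is impossible. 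A parallel, perhaps cleaner, route: after replacing $g$ by an iterate we may assume $g$ has a repelling fixed point $p$; then $q:=f(p)$ is a fixed point of $g$ lying in $J(g)$, hence repelling (or parabolic), and in the linearizing Poincar\'e coordinates $\Phi,\Psi$ of $g$ at $p,q$ the relation $fg=gf$ yields $h(\lambda\zeta)=\mu h(\zeta)$ for $h=\Psi^{-1}\circ f\circ\Phi$ near $0$, forcing $h(\zeta)=b\zeta^{k}$; since $f\circ\Phi$ and $\zeta\mapsto\Psi(b\zeta^{k})$ are both entire and agree near $0$, the identity $f\circ\Phi=\Psi(b\zeta^{k})$ holds on all of $\C$, and as $\Phi$ is surjective onto $\C$ up to at most one omitted value and has finite positive order, while $\Psi(b\zeta^{k})$ has finite order, $f$ cannot be transcendental. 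Either way, $d=1$. The reason a soft argument does not suffice, and the main obstacle, is precisely here: elementary growth comparisons only yield a functional inequality for $\log M(r,f)$ that some transcendental entire functions do satisfy (for instance functions of order zero with $\log M(r,f)\asymp\log r\cdot\log\log r$), so the dynamical input — that $f$ preserves $K(g)$, i.e.\ \eqref{eq:Greenfe} — is genuinely needed.

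Granting $d=1$, write $g(z)=az+c$ with $a\neq 0$. If $a=1$ then $g$ is a translation, of the asserted form with $e^{2\pi i\cdot 0}=1$. If $a\neq 1$, conjugate by $z\mapsto z+c/(1-a)$ to reduce to $g(z)=az$, so $fg=gf$ becomes $f(az)=af(z)$; writing $f(z)=\sum_{k\geq 0}b_{k}z^{k}$ gives $a^{k}b_{k}=ab_{k}$, i.e.\ $b_{k}=0$ whenever $a^{k-1}\neq 1$. As $f$ is transcendental, $b_{k}\neq 0$ for infinitely many $k$, so $a^{k-1}=1$ for some $k\geq 2$; thus $a$ is a root of unity, $a=e^{2\pi i m/n}$, and undoing the conjugation shows $g(z)=ze^{2\pi i m/n}+c'$ for some $c'\in\C$, which completes the proof.
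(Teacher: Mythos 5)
The paper does not prove this statement --- it quotes it as a known result of Baker and Iyer --- so there is no in-paper proof to compare against; judged on its own terms, your argument is correct in the sufficiency direction and in the degree-one analysis (the power-series comparison forcing the multiplier to be a root of unity), but the heart of the necessity direction, namely ruling out $\deg g = d \geq 2$, is a genuine gap: you sketch two routes and complete neither. In the second route, the step ``$q:=f(p)$ lies in $J(g)$, hence is repelling (or parabolic)'' is unjustified: all you have established is $f(K(g))\subseteq K(g)$, and a fixed point of $g$ inside $K(g)$ may perfectly well be an attracting fixed point in the interior of $K(g)$; proving $f(J(g))\subseteq J(g)$ for a \emph{transcendental} $f$ permuting with $g$ requires a separate argument. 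The first route ends with ``this pins the growth of $f$ down \ldots which is impossible,'' which is not a proof.

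More importantly, your stated reason for abandoning the elementary route is mistaken, and that route is essentially how Baker and Iyer argue. The growth comparison does not merely give the asymptotic relation $\log M(cr^{d},f)\sim d\log M(r,f)$ (which, as you observe, some slowly growing transcendental entire functions satisfy); it gives an inequality with an \emph{additive constant}. Indeed, for all large $r$ one has $M(r,g\circ f)\leq A\,M(r,f)^{d}$, while by Rouch\'e's theorem the image $g(\overline{D(0,r)})$ contains the closed disc of radius $cr^{d}$ about the origin, so $M(r,f\circ g)\geq M(cr^{d},f)$; hence (\ref{eq:perm}) yields
\[
\log M(cr^{d},f)\;\leq\; d\,\log M(r,f)+\log A \qfor \text{all large } r.
\]
Iterating this along $r_{n+1}=cr_{n}^{d}$ gives $\log M(r_{n},f)\leq d^{n}\bigl(\log M(r_{0},f)+\tfrac{\log A}{d-1}\bigr)$, while $\log r_{n}\sim Kd^{n}$ for some $K>0$; since $\log M(r,f)$ is increasing and $\log r_{n+1}/\log r_{n}\to d$, this forces $\log M(r,f)=O(\log r)$, so $f$ is a polynomial --- a contradiction. (Your candidate $\log M(r,f)\asymp\log r\cdot\log\log r$ satisfies the asymptotic relation but not the additive-constant inequality: the error there is of order $\log r$, not $O(1)$.) So the ``delicate point'' dissolves --- no potential theory or Nevanlinna theory is needed --- and this inequality is the ingredient your proof is missing.
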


Throughout this paper, when we refer to a \tmf, we mean a function meromorphic in $ \C $ and with an essential singularity at infinity.  It is straightforward to use results in \cite{MR0265595} to generalise Theorem A to \tmf s. As this result is not stated in \cite{MR0265595}, we give it here. 
\begin{theorem}
\label{theo:perm}
Suppose that $g$ is a non-constant rational map. Then there exists a \tmf\ $f$ that permutes with $g$ if and only if $g$ is an affine map of the form (\ref{defg}).
\end{theorem}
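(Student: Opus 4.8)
The plan is to prove the two implications separately. The \emph{if} direction is immediate from Theorem~A: if $g$ has the form~(\ref{defg}) then $g$ is a non-constant polynomial, so Theorem~A provides a {\tef} permuting with $g$, and every {\tef} is in particular a {\tmf}. For the \emph{only if} direction, suppose that the {\tmf} $f$ permutes with the non-constant rational map $g$; I would argue in two steps: first that $g$ is forced to be a polynomial, and then that a non-constant polynomial permuting with a {\tmf} must have the form~(\ref{defg}).

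For the first step --- the only genuinely new point --- suppose for a contradiction that $g$ has a pole at some $p\in\C$. Since $f$ is meromorphic near $p$ and $g$ is a holomorphic self-map of $\widehat{\C}$, the composition $g\circ f$ is meromorphic near $p$, and in particular has a limit in $\widehat{\C}$ as $z\to p$. On the other hand $g$ maps a punctured neighbourhood of $p$ onto a punctured neighbourhood of $\infty$, where $f$ has an essential singularity, so by the Casorati--Weierstrass theorem the image under $f\circ g$ of any punctured neighbourhood of $p$ is dense in $\C$; thus $f\circ g$ has no limit at $p$. Since $f\circ g$ and $g\circ f$ agree on a punctured neighbourhood of $p$, this is a contradiction, so $g$ has no finite pole and is therefore a non-constant polynomial.

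For the second step I would run the argument of Baker and Iyer with the Nevanlinna characteristic replacing the maximum modulus, which is where the estimates of \cite{MR0265595} are used. Write $d=\deg g$; conjugating by an affine map (which preserves all the hypotheses) we may take $g$ monic. One has $T(r,g\circ f)=d\,T(r,f)+O(1)$, whereas $T(r,f\circ g)$ grows essentially like $T(r^{d},f)$; applying these estimates with $g^{\circ n}$ in place of $g$ and using $f\circ g^{\circ n}=g^{\circ n}\circ f$ shows, when $d\ge 2$, that $T(r,f)=O(\log r)$, contradicting the transcendence of $f$. Hence $d=1$ and $g(z)=\lambda z+c$ with $\lambda\in\C\setminus\{0\}$. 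If $\lambda=1$ this already has the form~(\ref{defg}); otherwise $g$ has a unique fixed point, and conjugating by a translation (again affine) we may assume $g(z)=\lambda z$. Expanding both sides of $f(\lambda z)=\lambda f(z)$ as Laurent series at $0$ --- legitimate, since $f$ is meromorphic there --- forces $a_n(\lambda^{n-1}-1)=0$ for every Laurent coefficient $a_n$ of $f$, so if $\lambda$ were not a root of unity then $f(z)=a_1z$, contradicting transcendence. Therefore $\lambda=e^{2\pi i m/n}$ for some $m,n\in\Z$, and $g$ has the form~(\ref{defg}).

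The main obstacle is the reduction to $d=1$. In the entire case one compares $\log M(r,f\circ g^{\circ n})$ and $\log M(r,g^{\circ n}\circ f)$ directly, but with $f$ only meromorphic there is no maximum modulus available and one must instead control the Nevanlinna characteristics of these compositions --- in particular the counting function $N(r,f\circ g^{\circ n})$ of poles --- uniformly enough in $n$ to push through the iteration; supplying these estimates is precisely what one takes from \cite{MR0265595}, and the rest is routine.
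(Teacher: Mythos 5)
Your proposal is correct and follows essentially the same route as the paper: both arguments first show that $g$ can have no finite pole (the paper via the convention that $g(f(\zeta))$ must be undefined whenever $f(g(\zeta))$ is, you via Casorati--Weierstrass at a pole of $g$ --- the same phenomenon), and both then rely on Goldstein's growth estimates \cite{MR0265595} to rule out $\deg g\ge 2$. The only real difference is that where the paper simply cites \cite[Theorem~11]{MR0265595} to conclude that the multiplier of the affine map is a root of unity, you give a short self-contained Laurent-series argument at the fixed point, which is a perfectly good substitute.
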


Our main generalisation of Theorem A is to functions in the class $\mathcal{M}$. It is natural to state these results in terms of conjugacies. For suppose that $f$ and $g$ are permuting elements of $\mathcal{M}$, and that $g$ is \emph{conjugate} to a map $G\in\mathcal{M}$; in other words, there is a M\"{o}bius map $L$ such that $G = L^{-1} \circ g \circ L$. Then $F$ and $G$ are permuting elements of $\mathcal{M}$, where $F := L^{-1} \circ f \circ L$. 

It is well-known that a M\"{o}bius map $M$ is conjugate either to the map $z \mapsto z + 1$ (in which case $M$ is called \emph{parabolic}), or to the map $z \mapsto kz$, for some $k \in \C\setminus\{0\}$. In the second case, if $k = e^{i \theta}$, for $\theta$ rational (resp. irrational), then we say that $M$ is conjugate to a \emph{rational rotation} (resp. \emph{irrational rotation}).% Our result can now be stated as follows.
\begin{theorem}
\label{theo:permK}
Suppose that $g$ is a non-constant rational map. Then there exists a function $f \in\mathcal{M}$, with $S(f) \ne \emptyset$, and that permutes with $g$, if and only if $g$ is a M\"{o}bius map that is not conjugate to an irrational rotation.
\end{theorem}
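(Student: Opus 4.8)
The plan is to prove the two implications separately, reducing in each case --- via a M\"obius conjugacy, which by the remarks above preserves $\mathcal M$, permutability, and the condition $S(f) \neq \emptyset$ --- to a short list of normal forms for $g$. For the ``if'' direction I would simply exhibit a suitable $f$ for each admissible normal form. If $g$ is parabolic, normalise $g(z) = z+1$ and take $f(z) = z + \sin(2\pi z)$: this is transcendental entire, so $S(f) = \{\infty\}$, and $f(z+1) = f(z)+1$. If $g$ is conjugate to a rational rotation, normalise $g(z) = \omega z$ with $\omega^n = 1$ and take $f(z) = z\,e^{z^n}$, which is transcendental entire and satisfies $f(\omega z) = \omega z\, e^{\omega^n z^n} = \omega f(z)$. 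If $g$ is conjugate to $z \mapsto kz$ with $|k| \neq 1$ --- the only case requiring more than one essential singularity --- I would take the Weierstrass function $\wp$ with period lattice generated by $2\pi i$ and a fixed branch $\log k$, which is a genuine lattice precisely because $|k|\neq 1$, and set $f(z) = z\,\wp(\log z)$. Since $2\pi i$ is a period this is single-valued and meromorphic on $\C \setminus \{0\}$ with poles accumulating at $0$ and $\infty$, so $S(f) = \{0,\infty\}$; and since $\log k$ is a period, $f(kz) = kz\,\wp(\log k + \log z) = k f(z)$.

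For the ``only if'' direction, suppose $f \in \mathcal M$, $S(f) \neq \emptyset$, and $f \circ g = g \circ f$ with $g$ a non-constant rational map. The key first step is to show that $S(f)$ is completely invariant under $g$, i.e. $g^{-1}(S(f)) = S(f)$. This rests on two local facts about essential singularities: (a) for $\psi$ meromorphic near a point $w_0$ and $g$ a non-constant rational map, $g\circ\psi$ has an essential singularity at $w_0$ if and only if $\psi$ does --- the substantive direction being that if $g\circ\psi$ tended to a limit then $\psi$ would omit a non-empty open set of values near $w_0$, contradicting Casorati--Weierstrass; and (b) since $g$ is, in suitable local coordinates, a power map $\zeta \mapsto \zeta^m$, the composite $\psi \circ g$ has an essential singularity at $z_0$ if and only if $\psi$ has one at $g(z_0)$. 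Combining (a), $f\circ g = g\circ f$, and (b) yields $z_0 \in S(f) \iff g(z_0) \in S(f)$.

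Now suppose first that $\deg g \geq 2$. Then $S(f)$ is a non-empty, closed, completely invariant \emph{countable} set, so by the standard dichotomy for such sets (they either lie in the exceptional set or contain the perfect Julia set) we get $S(f) \subseteq E(g)$, whence $1 \le |S(f)| \le 2$. If $|S(f)| = 1$, complete invariance makes that point a fixed point; conjugating it to $\infty$ makes $g$ a polynomial and $f$ a \tmf, so Theorem~\ref{theo:perm} forces $\deg g = 1$, a contradiction. If $|S(f)| = 2$ then $E(g) = S(f)$, and, replacing $g$ by $g^2$ and conjugating, we may take $S(f) = \{0,\infty\}$ and $g(z) = z^d$ with $d \ge 2$, so that $f$ is meromorphic on $\C \setminus \{0\}$ with $f(z^d) = f(z)^d$; then the zero set and pole set of $f$ are completely invariant under $z \mapsto z^d$, hence --- since every grand orbit of that map accumulates on the unit circle --- empty, and writing $f(z) = C z^\ell e^{H(z)}$ and comparing Laurent coefficients in $H(z^d) - dH(z) = \mathrm{const}$ forces $H$ constant, making $f$ a monomial with $S(f) = \emptyset$; contradiction. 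Hence $\deg g = 1$. Finally, assume for contradiction that the M\"obius map $g$ is conjugate to an irrational rotation; normalise $g(z) = kz$ with $k = e^{2\pi i\alpha}$, $\alpha \notin \Q$. Density of the orbits $\{k^n z_0\}$ in circles and complete invariance of $S(f)$ force $S(f) \subseteq \{0,\infty\}$. If $S(f)$ is $\{0\}$ or $\{\infty\}$, then (conjugating by $z\mapsto1/z$ if needed) $f$ is a \tmf\ permuting with $g$, so by Theorem~\ref{theo:perm} $g$ has the form \eqref{defg}, contradicting irrationality of $\alpha$. If $S(f) = \{0,\infty\}$, then as before $f$ is zero-free and pole-free on $\C \setminus \{0\}$, so $f(z) = Cz^\ell e^{H(z)}$; now $f(kz) = kf(z)$ gives $H(kz) - H(z) = \mathrm{const}$, and since $k^n \neq 1$ for $n \neq 0$ every non-constant Laurent coefficient of $H$ vanishes, so $f$ is again a monomial and $S(f) = \emptyset$; contradiction. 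Thus $g$ is not conjugate to an irrational rotation.

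I expect the two main points of difficulty to be, first, making the ``composition versus essential singularity'' lemma fully rigorous inside the class $\mathcal M$, where poles of $f$ may accumulate at points of $S(f)$ and where points of $S(f)$ may themselves be poles or critical points of $g$; and second, the loxodromic construction $f(z) = z\,\wp(\log z)$, since it is the unique case that genuinely needs two essential singularities, and one must verify carefully that it is single-valued and that it permutes with $z \mapsto kz$. Everything else is either an application of Theorem~\ref{theo:perm}, a routine Laurent-coefficient computation, or a standard fact from the iteration theory of rational maps.
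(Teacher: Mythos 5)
Your argument is correct in substance but takes a genuinely different route from the paper's in both directions. For the ``if'' direction the paper quotes Theorem~A in the parabolic and elliptic cases and, for $|\lambda|\neq 1$, constructs $f$ as the series (\ref{nicefun}); your closed forms $z+\sin 2\pi z$, $z e^{z^n}$ and especially $z\,\wp(\log z)$ (single-valued because $2\pi i$ is a period, and a genuine lattice because $\log|k|\neq 0$) do the same job and are arguably cleaner. For the ``only if'' direction the paper splits according to whether $|S(f)|$ is one, two, or at least three: one point reduces to Theorem~\ref{theo:perm}; three or more points are handled by showing $\bigcup_{k\geq 0}g^{-k}(\alpha)\subset S(f^2)$, whose closure would then contain the uncountable Julia set of $g$; irrational rotations are excluded by differentiating $e^{i\theta}f(z)=f(e^{i\theta}z)$ and counting preimages of a value; and the two-point case is dispatched by an appeal to ``the techniques of Baker and Iyer'' with the details omitted. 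Your organising principle --- complete invariance $g^{-1}(S(f))=S(f)$, which forces $S(f)\subseteq E(g)$ when $\deg g\geq 2$ and $S(f)\subseteq\{0,\infty\}$ for a rotation, followed by Theorem~\ref{theo:perm} or a zero/pole exclusion and a Laurent-coefficient computation on $\C\setminus\{0\}$ --- is tidier and actually supplies the detail the paper omits in the two-point case. What it costs you is the composition lemma $S(g\circ f)=S(f)$ and $S(f\circ g)=g^{-1}(S(f))$: at \emph{non-isolated} points of $S(f)$ (which can occur, since $S(f)$ is only assumed countable and closed) the classical Casorati--Weierstrass statement you invoke does not apply directly. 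This is repairable --- for instance, the cluster set of $f$ at any point of $S(f)$ is connected (because $U\setminus S(f)$ is connected for a disc $U$) and has more than one point, so the finite-to-one map $g$ cannot collapse it to a point; alternatively, isolated points are dense in a non-empty countable closed set and $S(g\circ f)$ is closed --- and it is no worse than the paper's own unproved use of the fact that $S(f^2)=S(f)\cup f^{-1}(S(f))$ is closed and countable, but you should prove it rather than merely flag it as a difficulty.
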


Baker \cite{MR0147650, MR0226009} also proved the following result.
\begin{theoremB}
\label{theo:Bak}
If an entire function $ f $ is not a polynomial of degree less than $ 2 $, then the set of all entire functions that permute with $ f $ is countably infinite.
\end{theoremB}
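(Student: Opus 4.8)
The plan is to establish the two halves of the statement separately: that the set $\mathcal{C}(f)$ of entire functions permuting with $f$ is infinite, and that it is countable. Infinitude is immediate, since all iterates $f^{n}$, $n\ge 1$, lie in $\mathcal{C}(f)$, and these are pairwise distinct: if $f^{n}=f^{m}$ with $n>m$, then $f^{\,n-m}$ is the identity on the dense set $f^{m}(\C)$ (Picard's theorem), hence $f^{\,n-m}=\mathrm{id}$, and an entire function which is an iterative root of the identity is a rotation $z\mapsto e^{2\pi i k/\ell}z+c$, contrary to hypothesis.

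For countability I would first normalise $f$. Since $\mathcal{C}(f)\subseteq\mathcal{C}(f^{p})$ and $f^{p}$ is again not a polynomial of degree less than $2$, we may replace $f$ by any of its iterates, and some iterate has a repelling fixed point (from Fatou--Julia theory when $f$ is a polynomial of degree at least $2$, and from the work of Baker and Bergweiler on periodic points of transcendental entire functions in general); so assume $f$ has a repelling fixed point $\zeta$ of multiplier $\lambda$, $|\lambda|>1$. The constant maps in $\mathcal{C}(f)$ are exactly the (at most countably many) fixed points of $f$, so only the non-constant members need counting. Let $\Psi$ be the Poincar\'e--K\oe nigs function of $f$ at $\zeta$, that is, the entire function with $\Psi(0)=\zeta$, $\Psi'(0)=1$ and $f(\Psi(w))=\Psi(\lambda w)$ for all $w$; it is necessarily transcendental (a polynomial $\Psi$ is incompatible with this equation and the normalisation). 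If $g\in\mathcal{C}(f)$ is non-constant, put $h:=g\circ\Psi$; then $h(\lambda w)=f(h(w))$ for all $w$, and $\beta:=h(0)=g(\zeta)$ is a fixed point of $f$. Writing $m\ge 1$ for the order of the zero of $g-\beta$ at $\zeta$, an elementary comparison of Taylor coefficients in $h(\lambda w)=f(h(w))$, carried out in the linearising coordinate of $f$ at $\beta$, shows that $\beta$ is itself repelling with multiplier $\lambda^{m}$ and that $h(w)=\Psi_{\beta}(aw^{m})$ for all $w$, where $\Psi_{\beta}$ is the Poincar\'e--K\oe nigs function of $f$ at $\beta$ and $a\in\C\setminus\{0\}$. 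Hence $g(z)=\Psi_{\beta}\bigl(a\,(\Psi^{-1}(z))^{m}\bigr)$ for $z$ near $\zeta$ (with $\Psi^{-1}$ the local inverse), so by the identity theorem $g$ is completely determined by the triple $(\beta,m,a)$. When $f$ is a polynomial, $\Psi$ and $\Psi_{\beta}$ have finite order, so a transcendental $g$ would make $g\circ\Psi$ grow too fast to have that order; thus in the polynomial case $g$ is a polynomial. In any event there are only countably many admissible pairs $(\beta,m)$.

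The crux, and what I expect to be the main obstacle, is to show that for each fixed $(\beta,m)$ only countably many values of $a$ occur --- equivalently, that $g(\Psi(w))=\Psi_{\beta}(aw^{m})$ has an entire solution $g$ for only countably many $a$. I would settle this by comparing the growth of the two sides. When $f$ is a polynomial of degree $d$, the functions $\Psi$ and $\Psi_{\beta}$ have finite orders $\log d/\log|\lambda|$ and $\log d/\log|\lambda^{m}|$; for each possible degree of $g$, comparing orders and types pins $|a|$ to a single value, and comparing the Phragm\'en--Lindel\"of indicator functions of $g\circ\Psi$ and of $w\mapsto\Psi_{\beta}(aw^{m})$ pins $\arg a$ to finitely many values --- the point being that the indicator of a Poincar\'e function admits no continuous rotational symmetry, since such a symmetry would force $\Psi$ to factor through a power map with injective, hence affine, quotient and so to be a polynomial. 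When $f$ is transcendental one runs the same scheme with Nevanlinna's characteristic in place of the order of growth; the polynomial members of $\mathcal{C}(f)$ may alternatively be handled via Theorem~A, which confines them to the affine maps $z\mapsto\omega z+c$ with $\omega$ a root of unity, the admissible ones among which form a discrete set because each is determined by a fixed point of $f$ together with $\omega$. In all cases $a$ ranges over a countable --- indeed discrete --- set, whence $\mathcal{C}(f)$ is countable; for $f$ a polynomial of degree at least $2$ this recovers Ritt's classical description of permutable polynomials.
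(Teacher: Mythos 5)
Your linearisation setup is correct and gets you most of the way: after passing to an iterate with a repelling fixed point $\zeta$ of multiplier $\lambda$, every non-constant permuting $g$ does satisfy $g\circ\Psi(w)=\Psi_{\beta}(aw^{m})$ and is determined by the triple $(\beta,m,a)$, with $\beta$ ranging over the countable set of fixed points and $m$ over $\N$. But the step you yourself identify as the crux --- that for fixed $(\beta,m)$ only countably many $a$ occur --- is a genuine gap, and the growth comparison you propose cannot close it. In the transcendental case the Poincar\'e function $\Psi$ is of infinite order, so order, type and the Phragm\'en--Lindel\"of indicator are not available; worse, the Nevanlinna characteristic of $w\mapsto\Psi_{\beta}(aw^{m})$ depends on $a$ only through $|a|$ (up to bounded terms), so no comparison of characteristics can constrain $\arg a$ at all; and the unknown $g$ does not range over a countable family of ``growth classes'' that you could enumerate, so even pinning $|a|$ ``for each possible characteristic of $g$'' does not yield countability. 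Even in the polynomial case, the assertion that the indicator of a Poincar\'e function admits no continuous rotational symmetry because such symmetry ``would force $\Psi$ to factor through a power map'' is unsubstantiated: the indicator is a coarse asymptotic invariant, and its rotational symmetry implies no algebraic factorisation of $\Psi$.

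The gap is fixable by exactly the device used in the paper's proof of Theorem~\ref{theo:BHgen} (the Bergweiler--Hinkkanen method, of which your argument is a globalised version via the Poincar\'e function): pin $g$ down by its value at one further periodic point rather than by its growth. Choose a periodic point $a_{i}=\Psi(w_{i})$ of $f$ with $w_{i}\neq 0$ close to $\zeta$; permutability forces $g(a_{i})$ to be a periodic point of $f$, hence to lie in a countable set, while your formula gives $g(a_{i})=\Psi_{\beta}(aw_{i}^{m})$, and $a\mapsto\Psi_{\beta}(aw_{i}^{m})$ is a non-constant entire function of $a$, so the preimage under it of a countable set is countable. With that substitution (and your correct handling of the iterates for infinitude, and of the constant and affine cases) the proof closes; as written, the countability of $a$ is asserted rather than proved.
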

We show that a result of Bergweiler and Hinkkanen on semiconjugation of entire functions \cite[Theorem 3]{MR1684251} can readily be extended to the class $\mathcal{M}$; see Section~\ref{pf:count} for details. In particular, this yields the following analogue of Theorem~B. 

\begin{theorem}
\label{theo:count}
If a function $ f \in \mathcal{M} $ is not a M\"{o}bius map, then the set of all elements of $\mathcal{M}$ that permute with $ f $ is countably infinite.
\end{theorem}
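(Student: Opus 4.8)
The plan is to prove the two halves of Theorem~\ref{theo:count} separately: that the centralizer of $f$ in $\mathcal{M}$ is infinite, and that it is at most countable. Only the second is substantial.

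For infinitude, the identity together with the iterates $f^{n}$ ($n\ge 1$) all lie in $\mathcal{M}$, since $\mathcal{M}$ is closed under composition, and all permute with $f$. They are pairwise distinct: if $f^{n}=f^{m}$ with $n>m\ge1$ then, writing $F=f^{m}$, we get $F=f^{\,n-m}\circ F$, so $f^{\,n-m}$ fixes every point of the image of $F$; since $f^{m}\in\mathcal{M}$ is not a M\"obius map, this image omits at most two points of $\widehat{\C}$, so $f^{\,n-m}$ is the identity, hence $f$ is injective on $\widehat{\C}\setminus S(f)$; by the Great Picard theorem an injective function in $\mathcal{M}$ can have no essential singularity, so $f$ is a meromorphic bijection of $\widehat{\C}$, i.e.\ a M\"obius map, contrary to hypothesis.

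For the upper bound the inputs are Bolsch's theory \cite{MR1423240,MR1703869} (a non-M\"obius function in $\mathcal{M}$ has a non-empty, perfect Julia set in which the repelling periodic points are dense) and the rigidity underlying \cite[Theorem~3]{MR1684251}. Since a $g$ permuting with $f$ also permutes with $f^{p}$, which is again a non-M\"obius element of $\mathcal{M}$, we may replace $f$ by an iterate and so assume that $f$ has at least three repelling fixed points; the constants permuting with $f$ then correspond to the countable set of fixed points of $f$ in $\widehat{\C}\setminus S(f)$. Let $g\in\mathcal{M}$ be non-constant and permute with $f$, let $\zeta$ be a repelling fixed point of $f$ with multiplier $\lambda$, and let $\psi$ be the Koenigs linearizer of $f$ at $\zeta$, normalized by $\psi(0)=\zeta$, $\psi'(0)=1$, $f\circ\psi(w)=\psi(\lambda w)$; this $\psi$ depends only on $f$, extends to a non-constant element of $\mathcal{M}$ whose image omits at most two points. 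Assuming $\zeta\notin S(g)$ (see below), comparison of the Taylor series of $f\circ g$ and $g\circ f$ at $\zeta$ shows that either $g$ is constant or $\eta:=g(\zeta)$ is a fixed point of $f$ with multiplier $f'(\eta)=\lambda^{k}$ for a unique integer $k\ge1$. In the latter case, passing to $G:=g\circ\psi$, which satisfies $f\circ G=G\circ(w\mapsto\lambda w)$, and conjugating by the linearizer $\psi_{\eta}$ of $f$ at $\eta$ (available as $|\lambda^k|>1$), one finds $\psi_{\eta}^{-1}\circ G$ solves $H(\lambda w)=\lambda^{k}H(w)$, hence equals the monomial $w\mapsto hw^{k}$. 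Thus $g\circ\psi=\psi_{\eta}\circ(w\mapsto hw^{k})$, so $g$ is determined on the image of $\psi$ (which omits at most two points), and hence everywhere, by the triple $(\eta,k,h)$. Finally, fixing once and for all a repelling fixed point $\zeta'\ne\zeta$ in the image of $\psi$ and a point $w_{*}\ne0$ with $\psi(w_{*})=\zeta'$, the value $g(\zeta')=\psi_{\eta}(h\,w_{*}^{\,k})$ must be a fixed point of $f$, so $h\,w_{*}^{\,k}\in\psi_{\eta}^{-1}(\mathrm{Fix}(f))$, a countable set, which confines $h$ to countably many values; as $\eta$ ranges over the countable set $\mathrm{Fix}(f)$ and then determines $k$, only countably many triples occur, and countability follows.

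The main obstacle is the claim, used above, that a non-constant $g\in\mathcal{M}$ permuting with $f$ satisfies $\zeta\notin S(g)$ at the repelling periodic points $\zeta$ of $f$. The idea is that if $\zeta\in S(g)$, then playing $f\circ g=g\circ f$ off against the countable set $g^{-1}(S(f))$ forces $S(g)$ to contain a subset of the backward $f$-orbit of $\zeta$ that is dense in $J(f)$, contradicting the countability of $S(g)$ since $J(f)$ is perfect. Making this precise, and more generally importing Bolsch's structure theory (non-empty perfect Julia set, density of repelling periodic points, control of singular sets under composition) into the framework of \cite{MR1684251}, is where the real work of extending \cite[Theorem~3]{MR1684251} to the class $\mathcal{M}$ lies; the remaining points — that the Koenigs linearizers lie in $\mathcal{M}$ and have image omitting at most two points — follow from $S(f)$ being a countable closed set together with standard normality arguments.
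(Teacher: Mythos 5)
Your overall strategy for the hard direction is genuinely different from the paper's. The paper proves countability by the Bergweiler--Hinkkanen device: it fixes a repelling fixed point $\xi$ of $f^p$, a backward orbit $a_{i,n}\to\xi$ under a local inverse branch, and sorts each permuting $g$ into one of countably many classes $P_{i,j,k}$ according to $g(\xi)$, the disk it maps into, and the periodic point $g(a_i)$; an induction then shows $g$ is pinned down on the whole sequence $(a_{i,n})$, and the identity principle gives at most one $g$ per class. Your route instead linearizes $f$ at $\xi$ and at $\eta=g(\xi)$ and classifies $g$ locally as $\psi_\eta\circ(w\mapsto hw^k)\circ\psi^{-1}$, then counts the triples $(\eta,k,h)$. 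Both arguments ultimately rest on the same input from Bolsch's theory (non-empty perfect Julia set with dense repelling periodic points), and your multiplier computation $f'(\eta)=\lambda^k$ is exactly the ``calculation'' the paper alludes to. What the paper's version buys is that it needs only \emph{local} inverse branches of $f$ and $h$ near the fixed points; what yours buys is an explicit normal form for $g$ near $\xi$, and an honest proof of the ``infinite'' half (distinctness of the iterates), which the paper disposes of in a one-line remark.

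Two points in your write-up are left as acknowledged debts, and they are of unequal seriousness. The claim that $\xi\notin S(g)$ is a real issue --- the paper's own proof silently assumes it when it writes ``$g(\xi)$ is a fixed point of $h^p$'' and invokes continuity of $g$ at $\xi$ --- but your sketch is correct and is essentially the argument already used in the proof of Theorem~\ref{theo:permK}: if $\xi\in S(g)$ then the backward $f$-orbit of $\xi$ lands in $S(g)\cup g^{-1}(S(f))$, a closed countable set, whose closure would then contain the perfect set $J(f)$; alternatively one can simply add the choice of repelling periodic point to the countable index set, since $S(g)$ is countable and these points are dense in the perfect set $J(f)$. The weaker link is your appeal to ``standard normality arguments'' for the global Koenigs linearizer: showing that $\psi(w)=\lim f^{n}(\psi_0(\lambda^{-n}w))$ lies in $\mathcal{M}$ requires proving that the union over $n$ of the singular sets of $f^{n}\circ\psi_0\circ\lambda^{-n}$ is closed as well as countable, and ``image omits at most two points'' is the blow-up/exceptional-value theory for the class $\mathcal{M}$, not mere normality. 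This debt is avoidable: since repelling periodic points of $f$ accumulate at $\xi$, you can choose your auxiliary point $\zeta'$ inside $\psi(D(0,\varepsilon))$ for the \emph{local} linearizer, pin down $h$ there, and then extend $g$ from a neighbourhood of $\xi$ by the identity principle, so that no global $\psi$ is ever needed. With that modification your argument closes up and is a legitimate alternative to the paper's.
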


If $f$ is an entire function that is not a polynomial of degree less than two, then the iterates of $f$ form a countably infinite set of entire functions that permute with $f$. An analogous remark holds if $f \in\mathcal{M}$. However, if $f$ is a \tmf\ that is not entire then, in general, the iterates of $f$ are not meromorphic in $ \C $.  Indeed, suppose we define a function meromorphic in $ \C $ to be \emph{minimally permuting} if, among such functions, it permutes only with itself and with the identity map. Then we have the following.

\begin{theorem}
\label{theo:onlytwo}
There exist \tmf s that are minimally permuting.
\end{theorem}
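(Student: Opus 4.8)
The plan is to exhibit a single \tmf{} with a pole, and then to cut down its permuting partners until only the identity and itself survive. Let $f$ be a \tmf{} with at least one pole, and suppose $g$ is meromorphic in~$\C$ with $f\circ g=g\circ f$. Since $\infty$ is an essential singularity of $f$, the symbol $f(\infty)$, and hence $g(f(\infty))$, is undefined; so if $g(\infty)\in\C$ then, evaluating the permutation relation at $z=\infty$, the left-hand side $f(g(\infty))$ is a well-defined point of $\widehat\C$ while the right-hand side is undefined, a contradiction. This rules out $g$ being a finite constant or a non-affine M\"obius map. If $g$ is a non-constant rational map then, by the ``only if'' direction of Theorem~\ref{theo:perm}, $g$ has the form (\ref{defg}). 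Finally, if $g$ is transcendental then $\infty$ is an essential singularity of $g$ as well, so evaluating the permutation relation at a pole $p$ of $f$ forces $g(p)=\infty$; thus $\mathrm{poles}(f)\subseteq\mathrm{poles}(g)$ and, by symmetry, $\mathrm{poles}(f)=\mathrm{poles}(g)=:P$, while a similar computation gives $f^{-1}(P)=g^{-1}(P)$ and, inductively, $f^{-n}(P)=g^{-n}(P)$ for all $n$. In particular $g$ is then not entire.

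Next I would fix a \tmf{} with no affine symmetry and no finite fixed point, for instance $f(z)=z+e^{z}/\sin z$, whose poles are exactly $\pi\Z$. A map $g$ of the form (\ref{defg}), say $g(z)=\omega z+c$ with $|\omega|=1$, that permutes with this $f$ must satisfy $g(\pi\Z)\subseteq\pi\Z$ (evaluate the permutation relation at a pole and use $g(\infty)=\infty$); taking the images of $0$ and of $\pi$ forces $c\in\pi\Z$ and $\omega\in\Z$, hence $\omega=\pm1$, and the two remaining one-parameter families $g(z)=z+k\pi$ and $g(z)=-z+k\pi$ are eliminated by direct substitution using $\sin(z+k\pi)=(-1)^{k}\sin z$. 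So the only affine map permuting with $f$ is the identity, and it remains only to rule out transcendental permuting partners other than $f$ itself.

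By the first paragraph, if $g$ is transcendental and permutes with $f$ then $g=\mathrm{id}+k$ for some transcendental meromorphic $k$ with $\mathrm{poles}(k)=\pi\Z$, and $f\circ g=g\circ f$ becomes the functional equation
\begin{equation*}
k(z)+h\bigl(z+k(z)\bigr)=h(z)+k\bigl(z+h(z)\bigr),\qquad h(z):=e^{z}/\sin z .
\end{equation*}
It has the obvious solutions $k\equiv0$ (so $g=\mathrm{id}$) and $k\equiv h$ (so $g=f$), and the whole theorem comes down to showing that, for this $f$, these are the only solutions that are meromorphic in the plane. I expect this to be the main obstacle. The route I would take is: (i) use the rigidity $f^{-n}(P)=g^{-n}(P)$, together with the fact that the chosen $f$ has only simple poles and (choosing $f$, if necessary, so that its level sets over $P$ are simple) only simple preimages over $P$, to pin down the poles of $k$ and their orders, noting that near a pole $p\in\pi\Z$ the two composition terms $h(z+k(z))$ and $k(z+h(z))$, each of which picks up wild behaviour because its argument tends to $\infty$, an essential singularity of both $h$ and $k$, must have their singular parts cancel so as to leave only a pole, which is a strong restriction on $k$; (ii) supplement this with a growth comparison, via Nevanlinna theory in the plane, forcing $k$ and $h$ to have the same order and, eventually, $k\in\{0,h\}$; and, if a direct function-theoretic argument does not close the gap, (iii) appeal to the extension to $\mathcal{M}$ of Bergweiler and Hinkkanen's semiconjugation theorem used in Section~\ref{pf:count}, which describes all $g\in\mathcal{M}$ with $f\circ g=g\circ f$, and extract that for our $f$ the only such $g$ that are meromorphic in the whole plane are $\mathrm{id}$ and $f$, the others being built from $f$ by an iteration-type procedure and hence, like every iterate $f^{\circ n}$ with $n\ge2$, carrying essential singularities at the poles of $f$. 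Making step~(ii)/(iii) rigorous for a concrete $f$ is the genuine difficulty; everything preceding it is bookkeeping with the definition of permutability, and the fact that higher iterates of $f$ fail to be meromorphic in the plane is exactly what makes the conclusion possible.
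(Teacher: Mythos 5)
There is a genuine gap, and it is exactly where you say it is: the entire content of the theorem is the uniqueness of the transcendental permuting partner, and your proposal does not prove it. Everything up to the functional equation $k(z)+h(z+k(z))=h(z)+k(z+h(z))$ is, as you note, bookkeeping; but the three routes you sketch for closing the argument do not close it. Route (iii) in particular rests on a misreading of the semiconjugation result: Theorem~\ref{theo:BHgen} (the extension of Bergweiler--Hinkkanen to $\mathcal{M}$) yields only that the set of permuting partners is \emph{countable}; it gives no classification of them, and no statement that they are ``built from $f$ by an iteration-type procedure,'' so you cannot extract from it that the only partners meromorphic in the plane are $\mathrm{id}$ and $f$. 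Route (ii) is a hope, not an argument. Route (i) is the right instinct, but your choice of $f(z)=z+e^{z}/\sin z$ makes it hard to execute: with infinitely many poles you lose the single global ``order'' $m$ that lets one write $g=f^{m}H$ with $H$ entire and zero-free, which is the pivot of any elementary argument of this type.

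For comparison, the paper gives two examples, and in each case the uniqueness step requires a real tool. The first, $f(z)=e^{z}/z+z$, has exactly one (simple) pole; matching zeros and poles of $f$ and $g$ gives $g=f^{m}H$ with $H$ entire and non-vanishing, and then a delicate asymptotic analysis of the zeros of $f$ near the points $2\log(2|n|\pi)+2n\pi i$, pushed through an inverse branch of $f$ at infinity, forces $m=1$ and $H\equiv 1$. The second, $h(z)=e^{z}\prod_{j}(z-p_{j})^{-1}+z$, takes its $q\ge 17$ poles to be the zeros of Bartels' polynomial, so that the pole set is a unique range set for meromorphic functions; the identity $g^{-1}(S)=h^{-1}(S)$ (which you do derive in your generality) then yields $g=h$ by citing that value-distribution theorem. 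Your pole set $\pi\Z$ is neither a single point nor a known unique range set, so neither mechanism is available to you as stated. To repair the proposal you would either switch to a function with one pole and carry out the asymptotic zero analysis, or choose the poles to form a unique range set and invoke the corresponding sharing theorem.
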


The organisation of this paper is as follows. We give the proofs of Theorems \ref{theo:perm} and \ref{theo:permK} in Section \ref{pf:perm}, and the proof of Theorem \ref{pf:count} in Section \ref{pf:count}. Then, in Section \ref{pf:onlytwo}, we prove Theorem~\ref{theo:onlytwo} by giving two examples of minimally permuting \tmf s.
%
%
%%%%%%%%
%
%%%%%%%%
%
%
\section{Proofs of Theorems~\ref{theo:perm} and \ref{theo:permK}}  
\label{pf:perm}

We first show that Theorem \ref{theo:perm} follows easily from results of Goldstein \cite{MR0265595}.

\begin{proof}[Proof of Theorem \ref{theo:perm}]
If $ g $ is an affine map of the form (\ref{defg}), then it follows from Theorem A that there is a \tef\ that permutes with $ g $. Thus it suffices to prove the `only if' direction of Theorem \ref{theo:perm}.

Suppose, then, that $ g $ is a non-constant rational map and that $ f $ is a \tmf\ that permutes with $ g $.  Since there exists $\zeta \in \widehat{\C}$ such that $g(\zeta) = \infty$, it follows that $f(g(\zeta))$ is undefined, so $g(f(\zeta))$ is also undefined by (\ref{eq:perm}). Since $g$ is rational, we deduce that $\zeta = \infty$. Thus infinity is the only pole of $g$, which is therefore a polynomial. It follows by \cite[Theorem~2]{MR0265595} that $g$ is an affine map. The result then follows by \cite[Theorem~11]{MR0265595}.
\end{proof}

We now give the proof of Theorem \ref{theo:permK}, our main generalisation of Theorem~A.  The proof uses certain ideas from iteration theory.  We denote the \emph{iterates} of the function $ f $ by $ f^n:= \underbrace{f \circ f \circ ... \circ f}_{n \text{ times}} $, for $ n \in \N $.  The \emph{Julia set} of a rational map $ g $ of degree at least $ 2 $ is defined to be the set of points in $ \widehat{\C} $ with no neighbourhood in which the iterates of $ g $ form a normal family.  We refer to  \cite{beardon, MR1216719, MR2193309}, for example, for the properties of this set and an introduction to complex dynamics.

\begin{proof}[Proof of Theorem \ref{theo:permK}]
If $ g $ is a M\"{o}bius map that is either parabolic or conjugate to a rational rotation, then it follows from Theorem~A that there is an element of $\mathcal{M}$, conjugate to a {\tef}, that permutes with $ g $. 

If $ g $ is a M\"{o}bius map that is neither parabolic nor conjugate to a rotation, then it is conjugate to a map of the form $z \mapsto \lambda z$, for some $\lambda \in \C$ with $ |\lambda| \neq 0,1 $. Without loss of generality, we can assume that $ g $ is of this form.  We construct a function $ f \in \mathcal{M} $ that permutes with $ g $ as follows. Replacing $\lambda$ with $1/\lambda$ if necessary, we assume that $|\lambda| > 1$. Let $h(z)$ be the function $h(z) := z^2(1-z)^{-2},$ and let
\begin{equation}
\label{nicefun}
f(z) := \sum_{k\in\Z} \lambda^{-k} h(\lambda^k z).
\end{equation}

We claim that the (double) series in (\ref{nicefun}) defines a function which is meromorphic in $\widehat{\C}\setminus\{0, \infty\}$, but in no larger domain in $\widehat{\C}$. In particular $f \in\mathcal{M}$, but $f$ is not a {\tmf}. Since $f$ permutes with the maps $z \mapsto \lambda z$ and $z \mapsto z/\lambda$, this will complete the first part of the proof.

To prove the claim, suppose that $z \in \widehat{\C}\setminus\{0, \infty\}$. Let $U$ be a neighbourhood of $z$ sufficiently small that $\overline{U} \subset \widehat{\C}\setminus\{0, \infty\}$. Since $|\lambda| > 1$, it can be seen that there exists $k_0 \in \N$ such that, for all $w\in U$, we have
\begin{equation*}
|\lambda^{-k} h(\lambda^k w)| = \left|\frac{\lambda^{k} w^2}{(1 - \lambda^k w)^2}\right| <
\begin{cases}
2|\lambda|^{-k}, &\text{for } k \geq k_0, \\
2|w|^2|\lambda|^k, &\text{for } k \leq -k_0.
\end{cases}
\end{equation*}
It follows by the Weierstrass M--test that $f$ restricted to $U$ can be written as the sum of two analytic functions and a rational function, and so $f$ is meromorphic in $U$. Hence $f$ is indeed meromorphic in $\widehat{\C}\setminus\{0, \infty\}$, since $z$ was arbitrary. Moreover, the poles of $f$ are easily seen to be the points $\lambda^k$, for $k \in \Z$. Since these points accumulate on $\{0, \infty\}$, it follows that $f$ cannot be meromorphic in a neighbourhood of either zero or infinity. This completes the proof of our claim.

It remains to prove the `only if' direction of Theorem \ref{theo:permK}. Suppose that $g$ is any non-constant rational map and that $ f \in \mathcal{M}$ permutes with $ g. $  We consider separately the cases where $S(f)$ has one, two, or more than two points. 

First suppose that $S(f)$ is a singleton. Without loss of generality, we can assume that $S(f) = \{ \infty\}$. Then $f$ is a {\tmf}, so it follows from Theorem~\ref{theo:perm} that $ g $ is an affine map of the form (\ref{defg}), and thus is a M\"{o}bius map that is not conjugate to an irrational rotation.  

Suppose next that $S(f)$ has two elements. If there exists $\zeta \in \widehat{\C}\setminus S(f)$ such that $f(\zeta) \in S(f)$, then $S(f^2)$ has more than two elements, so we can replace $f$ with $f^2$. It follows that we can assume that both elements of $S(f)$ are omitted values, and without loss of generality take $S(f) = \{0, \infty\}.$  Thus $f$ is a \emph{transcendental self-map of the punctured plane}. It was pointed out by R{\aa}dstr{\"o}m \cite{MR0056702} that such maps are necessarily of the form
$$
f(z) := z^k \exp(f_1(z) + f_2(1/z)),
$$
where $k \in \Z$ and $f_1, f_2$ are entire functions. If $|z|$ is large, then $f$ behaves like the {\tef} $F(z) := z^k \exp(f_1(z))$. It is straightforward to show that the techniques of \cite{MR0097532, MR0105574} can also be applied in this situation with the same result; we omit the detail.

Finally, consider the case where $S(f)$ has at least three elements. We first show that any rational map $g$ that permutes with $ f $ is a M\"{o}bius map. Suppose, by way of contradiction, that the degree of $g$ is at least two. We deduce by Picard's great theorem and \cite[Theorem 4.1.2]{beardon} that there exists $\alpha \in \widehat{\C} \setminus S(f)$ such that $f(\alpha) \in S(f)$ and the set $\bigcup_{k \geq 0} g^{-k}(\alpha)$ is infinite.

Suppose that $k \geq 0$ and that $\zeta \in g^{-k}(\alpha)$. Since $f^2$ and $g^k$ permute, it follows that $g^k(f^2(\zeta))$ is undefined, and so $\zeta \in f^{-1}(S(f)) \cup S(f)$. We deduce that
\begin{equation}
\label{e1}
\bigcup_{k \geq 0} g^{-k}(\alpha) \subset f^{-1}(S(f)) \cup S(f) = S(f^2).
\end{equation}

It follows from (\ref{e1}), and \cite[Theorem 4.2.7]{beardon} that $\overline{S(f^2)}$ contains the Julia set of~$g$. Since \cite[Theorem 4.2.4]{beardon} the Julia set of $g$ is uncountable and $S(f^2)$ is closed and countable, this is a contradiction, so it follows that $g$ is a M\"{o}bius map.

To complete the proof, it is sufficient to show that $g$ is not of the form $z \mapsto e^{i\theta} z$, where $\theta$ is irrational. If this holds, then $e^{i\theta} f(z) = f(e^{i\theta} z)$, for $z \in \widehat{\C}\setminus S(f)$. Differentiating, we obtain a non-constant function $H \in \mathcal{M}$ with the property that $H(z)~=~H(e^{i\theta} z)$, for $z \in \widehat{\C}\setminus S(f)$. Now choose a point $\xi \in \mathbb{C}\setminus(\{0\}\cup S(f))$. Observe that $$H(e^{ik\theta} \xi) = H(\xi), \qfor k \in \Z.$$ 

Since $ \theta $ is irrational, the points $e^{ik\theta} \xi$, for $k \in\N$, accumulate on the whole circle $\{ w : |w| = |\xi| \}$. This is a contradiction, since all these points are elements of $H^{-1}(H(\xi))$, which can accumulate only on $S(f)$, and $S(f)$ is countable. 
\end{proof}
%
%
%%%%%%%%%%
%
%%%%%%%%%%
%
%
\section{Proof of Theorem~\ref{theo:count}}
\label{pf:count}

In \cite[Theorem 3]{MR1684251}, Bergweiler and Hinkkanen gave the following result on semiconjugation of entire functions, which is a generalisation of Theorem B.

\begin{theoremC}
Let $ f $ and $ h $ be entire functions such that $ f $ is not a M\"{o}bius map, and $ h $ is not the identity map.  Then there are only countably many entire functions $ g $ such that 
\begin{equation}
\label{eq:conj}
h \circ g = g \circ f.
\end{equation}
\end{theoremC}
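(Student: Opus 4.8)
The plan is to reduce everything to a single repelling fixed point of $f$ and then exploit linearisation there. Since $f$ is entire and not a M\"{o}bius map (and, implicitly, not constant), $f$ is either a polynomial of degree at least two or a transcendental entire function, and in either case it has a repelling periodic point. Any $g$ with $h\circ g = g\circ f$ also satisfies $h^{p}\circ g = g\circ f^{p}$ for all $p\in\N$, so replacing $(f,h)$ by $(f^{p},h^{p})$ can only enlarge the solution set; taking $p$ to be the period of a repelling periodic point, we may assume $f$ has a repelling fixed point $z_{0}$, with $\mu := f'(z_{0})$, $|\mu|>1$. The one loss here is if $h^{p}$ is the identity for some $p$, i.e.\ $h$ is a finite-order M\"{o}bius map; but then $g\circ f^{p}=g$, and a short linearisation argument at a repelling fixed point of an iterate of $f$ (the relation $G(\nu s)=G(s)$ for $G$ analytic near $0$ and $|\nu|>1$ forces $G$ constant) shows every solution $g$ is constant, with $g(z_{0})\in\{w:h(w)=w\}$, which is countable. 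So assume from now on that $h$ has infinite order under composition.

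Fix a solution $g$ and suppose first that $g$ is non-constant. Substituting $z=z_{0}$ into the functional equation gives $h(w_{0})=w_{0}$ for $w_{0}:=g(z_{0})$, so $w_{0}$ ranges over the countable zero set of $h(w)-w$; fix such a $w_{0}$. Comparing the lowest-order Taylor coefficients of $h\circ g$ and $g\circ f$ at $z_{0}$ shows that $m:=\operatorname{ord}_{z_{0}}(g-w_{0})$ satisfies $h'(w_{0})=\mu^{m}$; since $|\mu|>1$ this determines $m$ from $w_{0}$ (and excludes all $w_{0}$ for which $h'(w_{0})$ is not a positive power of $\mu$), and it makes $w_{0}$ a repelling fixed point of $h$, with entire Poincar\'{e} function $M$ normalised by $M(0)=w_{0}$, $M'(0)=1$. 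Writing $\Lambda$ for the entire Poincar\'{e} function of $f$ at $z_{0}$ (so $\Lambda(\mu w)=f(\Lambda(w))$, $\Lambda(0)=z_{0}$, $\Lambda'(0)=1$), a short linearisation computation at $z_{0}$ yields $g\circ\Lambda = M(\gamma w^{m})$ on all of $\C$, for some $\gamma\in\C\setminus\{0\}$. As $\Lambda$ is transcendental, $\Lambda(\C)$ is dense, so $g$ is completely determined by the pair $(w_{0},\gamma)$.

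It remains to show that, for each of the countably many admissible $w_{0}$, only countably many $\gamma$ occur — and this is where I expect the real work to lie. Since $\Lambda$ is transcendental entire, Picard's great theorem supplies a value $v_{0}\ne z_{0}$ taken infinitely often by $\Lambda$; the set $\Lambda^{-1}(v_{0})=\{a_{k}:k\in\N\}$ then consists of non-zero points (as $v_{0}\ne\Lambda(0)$) and, being infinite, contains $a,b$ with $(b/a)^{m}\ne 1$. For any admissible $g$, the function $w\mapsto M(\gamma w^{m})=g(\Lambda(w))$ takes equal values at $a$ and $b$, so $M(\gamma a^{m})=M(\gamma b^{m})$. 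The entire function $\gamma\mapsto M(\gamma a^{m})-M(\gamma b^{m})$ is not identically zero: otherwise $M(\gamma)=M\bigl((b/a)^{m}\gamma\bigr)$ for all $\gamma$, which — since $(b/a)^{m}\ne 1$ — forces $M$ to be constant or to have $M'(0)=0$, contradicting $M'(0)=1$. Hence the admissible $\gamma$ lie in a discrete set and are countably many. Together with the countability of the admissible $w_{0}$, this shows the non-constant solutions form a countable set; since the constant solutions $g\equiv c$ are exactly those with $h(c)=c$, which are also countable, the proof is complete.

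The genuine obstacle is the third paragraph. The equation $P(\mu w)=h(P(w))$ has, on its own, a whole one-parameter family of entire solutions — the rescalings $w\mapsto P(\beta w)$ — so countability must be extracted from the extra constraint that $P=M(\gamma w^{m})$ factor through the highly non-injective transcendental function $\Lambda$. Converting this factorisation requirement into an effective count, as above via one non-injectivity relation of $\Lambda$ and a one-variable argument in $\gamma$, is the crux; by comparison, the reduction to a repelling fixed point and the linearisation computations are routine.
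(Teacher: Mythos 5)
Your argument is correct, but it is genuinely different from the one in the paper (which follows Bergweiler and Hinkkanen's original scheme). The paper fixes a repelling fixed point $\xi$ of $f^p$, uses the density of periodic points of $f$ in $J(f)$ to choose auxiliary periodic points $a_i$ near $\xi$, and sorts the non-constant solutions into countably many sets $P_{i,j,k}$ according to which repelling fixed point $\eta_j$ of $h^p$ equals $g(\xi)$, which disk $D_i$ satisfies $g(D_i)\subset K_j$, and which periodic point $b_k$ of $h$ equals $g(a_i)$; each $P_{i,j,k}$ is then shown to be a singleton by pulling $a_i$ and $b_k$ back along local inverse branches and applying the identity principle. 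You instead extract a global normal form: the relation $G(\mu w)=h(G(w))$ for $G=g\circ\Lambda$, together with $\operatorname{ord}_0(G-w_0)=m$ and the normalisation of the Poincar\'e functions, forces $G=M(\gamma w^m)$ on all of $\C$ (your ``short linearisation computation'' is indeed routine: $\phi:=M^{-1}\circ G$ satisfies $\phi(\mu w)=\mu^m\phi(w)$ locally, so $\phi(w)/w^m$ is constant), and you then count the admissible $\gamma$ as zeros of the entire function $\gamma\mapsto M(\gamma a^m)-M(\gamma b^m)$, which your computation with $M'(0)=1$ correctly shows is not identically zero. Your route buys an explicit rigid parametrisation of the solutions by $(w_0,\gamma)$ and replaces the density of periodic points by a single application of Picard's theorem to $\Lambda$; the price is that it leans on both $\Lambda$ and $M$ extending to \emph{entire} functions, which uses $f$ and $h$ entire in an essential way, whereas the paper's purely local argument (inverse branches plus backward orbits) is what allows Theorem~C to be transplanted to the class $\mathcal{M}$ in Theorem~\ref{theo:BHgen}. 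Your side cases (constant $g$, and $h$ of finite compositional order, where $g\circ f^p=g$ forces $g$ constant via $G(\nu s)=G(s)$) are also handled correctly.
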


The method of proof of Theorem C can readily be adapted to give the corresponding result for functions in the class $\mathcal{M}$:

\begin{theorem}
\label{theo:BHgen}
Let $ f , h \in \mathcal{M}$ be such that $ f $ is not a M\"{o}bius map, and $ h $ is not the identity map.  Then there are only countably many $ g \in \mathcal{M}$ such that (\ref{eq:conj}) holds.
\end{theorem}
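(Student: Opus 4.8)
The plan is to adapt the proof of Theorem~C from \cite{MR1684251}, replacing each ingredient from the iteration theory of entire functions by its analogue in the class $\mathcal{M}$, for which one uses the work of Bolsch \cite{MR1423240, MR1703869}: if $f \in \mathcal{M}$ is not a M\"{o}bius map then its Julia set $J(f)$ is non-empty and perfect, the repelling periodic points of $f$ are dense in $J(f)$, and the backward orbit of a non-exceptional point is dense in $J(f)$; when $S(f) = \emptyset$ (so that $f$ is rational of degree at least two) this is classical. One also uses the identity principle in the form: since $S(g)$ is closed and countable, $\widehat{\C}\setminus S(g)$ is a domain, so two elements of $\mathcal{M}$ that agree on a set having an accumulation point outside the union of their singular sets must coincide. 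Before starting, we make two reductions. First, if $h^q = \mathrm{id}$ for some $q\in\N$ then $h$ is injective, hence a M\"{o}bius map of finite order; this case is handled directly and yields fewer than countably many $g$. In the remaining case $h^q \neq \mathrm{id}$ for every $q$, so the set of fixed points of $h^q$ is countable for each $q$. Second, since $f$ has a repelling periodic point, of period $p$ say, and since $f^p$ is not a M\"{o}bius map, $h^p$ is not the identity, and every $g$ with $h\circ g = g\circ f$ also satisfies $h^p \circ g = g\circ f^p$, we may replace $f$ and $h$ by $f^p$ and $h^p$; thus we assume that $f$ has a repelling fixed point $z_0 \notin S(f)$.

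With $z_0$ fixed, let $g\in\mathcal{M}$ satisfy $h\circ g = g\circ f$. \emph{Step one: $g$ maps periodic points to periodic points.} If $\zeta \notin S(g)$ is periodic of period $m$ for $f$, then, by the undefined-value convention for~(\ref{eq:conj}), $h^m(g(\zeta)) = g(f^m(\zeta)) = g(\zeta)$ is defined, so $g(\zeta)$ belongs to the countable set $\bigcup_m \{\,w : h^m(w) = w\,\}$. \emph{Step two: local rigidity at $z_0$.} Put $\lambda = f'(z_0)$, so $|\lambda| > 1$, and $w_0 = g(z_0)$, a fixed point of $h$ by Step one; assume first that $w_0$ is a repelling fixed point of $h$, with multiplier $\mu = h'(w_0)$. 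Koenigs linearisation gives conformal germs $\phi$ with $\phi(z_0) = 0$, $\phi\circ f = \lambda\phi$, and $\psi$ with $\psi(w_0) = 0$, $\psi\circ h = \mu\psi$. Substituting into~(\ref{eq:conj}) and comparing Taylor coefficients at $z_0$ shows that either $g$ is constant near $z_0$, hence identically equal to a fixed point of $h$, or $\mu = \lambda^{n_0}$ for the unique admissible $n_0 \in\N$ and $g = \psi^{-1}(c\cdot\phi^{n_0})$ near $z_0$ for a single parameter $c\in\C$; the remaining cases for $w_0$ (attracting, neutral, or $w_0 = \infty$) are handled by the same device in suitable coordinates, or force $g$ constant. \emph{Step three: $c$ ranges over a countable set.} The extended Koenigs lineariser $L$ of $f$ at $z_0$, characterised by $L(0) = z_0$ and $L(\lambda\zeta) = f(L(\zeta))$, is meromorphic on a domain $\Omega$ containing $0$ on which multiplication by $\lambda$ is conjugated to $f$, and the relation $g(L(\zeta)) = \psi^{-1}(c\zeta^{n_0})$ holds near $0$ and extends by analytic continuation. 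Evaluating at a point $\zeta_1 \in \Omega\setminus\{0\}$ with $L(\zeta_1)$ (or an $f$-iterate of it) equal to a periodic point of $f$ distinct from $z_0$ --- such $\zeta_1$ exist because the forward orbit under $f$ of a neighbourhood of $z_0$ contains, or maps onto, repelling periodic points other than $z_0$ --- Step one forces $\psi^{-1}(c\zeta_1^{n_0})$, and hence $c$, into a countable set. Consequently $g$ is determined by the pair $(w_0, c)$, which ranges over a countable set, and there are only countably many solutions. Taking $h = f$ yields Theorem~\ref{theo:count}, with the iterates of $f$ supplying infinitely many $g$ whenever $f$ is not a M\"{o}bius map.

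The step I expect to be the main obstacle is the third, or rather the bookkeeping that surrounds all three steps: unlike in the entire-function proof, where the singular set of $g$ is trivial, here $S(g)$ is a non-trivial countable set that depends on the solution $g$, and one must ensure that the point $z_0$ (in Step two) and the auxiliary point $\zeta_1$ (in Step three) can be chosen to avoid $S(g)$, $g^{-1}(S(h))$ and the other countable obstructions. Since $f$ has infinitely many repelling periodic points while all these obstructions are countable and $J(f)$ is uncountable, this is arranged by partitioning the solutions according to finitely much discrete data --- which repelling cycle of $f$ and which auxiliary point are used, together with the value $w_0$ --- and observing that each class contributes only countably many $g$. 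A further technical point is to confirm that Bolsch's theory indeed provides, for every $f\in\mathcal{M}$ that is not a M\"{o}bius map, a repelling fixed point of a suitable iterate and a Koenigs lineariser $L$ meromorphic on a domain large enough for the continuation argument despite the essential singularities of $f$; this, rather than any genuinely new idea, is what lies behind the phrase ``readily adapted''.
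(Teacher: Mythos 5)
Your proposal is correct in outline and follows the same overall strategy as the paper (which, like yours, is an adaptation of Bergweiler and Hinkkanen's argument using Bolsch's results that for $f\in\mathcal{M}$ not M\"obius the repelling periodic points are dense in the non-empty perfect set $J(f)$): pass to an iterate $f^p$ with a repelling fixed point $\xi$, show that $g(\xi)$ is forced to be a \emph{repelling} fixed point of $h^p$ and that $g$ maps periodic points to periodic points, and then show that $g$ is pinned down near $\xi$ by countably much discrete data, whence globally by the identity principle. Where you differ is in the rigidity step: you linearise $f^p$ at $\xi$ and $h^p$ at $g(\xi)$ via Koenigs and derive the explicit local normal form $g=\psi^{-1}(c\,\phi^{n_0})$ with $\mu=\lambda^{n_0}$, then determine $c$ by evaluating at a nearby periodic point of $f$; the paper instead avoids linearisation altogether, choosing an auxiliary periodic point $a_i$ near $\xi$ and propagating the identity $g(F^n(a_i))=H_j^n(b_k)$ along the backward orbits of the contracting inverse branches, so that two solutions in the same class agree on a sequence accumulating at $\xi$. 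The two mechanisms are equivalent in effect; yours gives more information (an explicit one-parameter family of local candidates) at the cost of case analysis over the nature of the fixed point $g(\xi)$, which the paper disposes of in one line ("by a calculation, $g(\xi)$ is a repelling fixed point"). One loose end to tidy: your preliminary split when $h^q=\mathrm{id}$ is asserted to be "handled directly", but it does require an argument (e.g.\ after conjugating $h$ to $w\mapsto e^{2\pi i m/q}w$, note that $G:=g^q$ satisfies $G\circ f=G$, and the multiplier computation at a repelling fixed point of an iterate of $f$ forces $G$, hence $g$, to be constant); the paper's version needs no such split because when $h^p$ has no repelling fixed points the multiplier calculation already shows that every solution is constant. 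Your closing remarks about choosing $\xi$ and the auxiliary point to avoid $S(g)$ and $g^{-1}(\infty)$ identify a genuine bookkeeping issue that the paper in fact passes over silently, and your proposed fix (index the solutions by the finitely many additional discrete choices) is the right one.
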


Bergweiler and Hinkkanen's proof of Theorem C uses the facts that, if $ n \in \N $ and~$ f $ is entire but is not a M\"{o}bius map, then $ f^n $ is also entire, and the repelling periodic points of $ f $ are dense in the Julia set $ J(f) $, which is a non-empty perfect set. Here a point $ \zeta \in \widehat{\C} $ is called \emph{periodic} if there exists $p \in \N$ such that $f^p(\zeta) = \zeta$, and it is also called \emph{repelling} if $|f^p(\zeta)| > 1$.

It was shown in \cite{MR1423240} that, if $ f \in \mathcal{M} $ is not a M\"{o}bius map, then the Julia set $ J(f) $ is a non-empty perfect set and that the repelling periodic points of $ f $ are dense in $ J(f) $.  In this case $ J(f) $ is the set of points in $ \widehat{\C} $ at which either some iterate of $ f $ is not defined, or the iterates $ \{ f^n: n \in \N \} $ are all defined but do not form a normal family. 

These results on the properties of the Julia set for functions in $\mathcal{M}$ are all that is needed to adapt the proof of Theorem C and so prove Theorem \ref{theo:BHgen}.  Clearly, Theorem \ref{theo:count} then follows immediately. For completeness, we give a brief proof of Theorem \ref{theo:BHgen} using Bergweiler and Hinkkanen's method.

\begin{proof}[Proof of Theorem \ref{theo:BHgen}]
We will define a countable collection of subsets of $\mathcal{M}$, denoted by $(P_{i,j,k})$, for $i,j,k \in\N$, and show that:
\begin{enumerate}[(i)]
\item every non-constant $g \in\mathcal{M}$ that satisfies (\ref{eq:conj}) lies in  $ P_{i,j,k}$ for some $i,j,k~\in~\N$, and \label{P2}
\item for each $i,j,k \in \N$, the set $P_{i,j,k}$ contains at most one element.\label{P1}
\end{enumerate}
Since there are at most countably many constant functions $ g $ satisfying (\ref{eq:conj}) (because if $ g \equiv c $ then $ c $ is a fixed point of $ h $), it is easy to see that Theorem \ref{theo:BHgen} then follows.

To define the sets $P_{i,j,k}$, we consider the indices $i,j,k$ in turn.  

\begin{itemize}
\item For some $ p \in \N $, $ f^p $ has a repelling fixed point, $ \xi $ say.  Thus we can construct a nested sequence of disks $ (D_i)_{i \in \N} $, centred at $ \xi$, in which $ f^p $ is defined and univalent, with the radius of $ D_i $ tending to $ 0 $ as $ i \to \infty $, and such that a univalent branch $ F $ of $ f^{-p} $ maps each $ D_i $ into a relatively compact subset of itself, with $ F^n(z) \to \xi $ uniformly as $ n \to \infty $ for $ z \in D_i $. Since $ \xi \in J(f) $, it follows from the properties of $ J(f) $ described above that, for each $ i \in \N, $ we can choose $ a_i \in D_i \setminus \{ \xi \} $ and $ p_i \geq 1 $ such that $ f^{p_i}(a_i) = a_i$.
\item Now let $(\eta_j)_{j\in\N}$ be an enumeration of the repelling fixed points of $ h^p $.  Then we argue similarly that, for each $j\in\N$, there is a disk $ K_j $ centred at $ \eta_j $ in which $ h^p $ is defined and univalent, and such that a univalent branch $ H_j $ of $ h^{-p} $ defined on $ K_j $ fixes $ \eta_j $ and maps $ K_j $ into a relatively compact subset of itself, with $ H_j^n(z) \to \eta_j $ uniformly as $ n \to \infty $ for $ z \in K_j $.
\item Finally, let $(b_k)_{k\in\N}$ be an enumeration of all the periodic points of $h$. 
\end{itemize}
Note that, for simplicity, we have assumed here that $ h $ has infinitely many repelling fixed points and periodic points, but the argument remains valid in cases where there are only finitely many. 

For each $i,j, k \in \N$, we now define $ P_{i,j, k}$ to be the set of all non-constant $g \in\mathcal{M}$ that satisfy~(\ref{eq:conj}) and are such that $$g(\xi) = \eta_j, \, \quad g(D_i) \subset K_j \quad\text{ and } \quad g(a_i) = b_k \in K_j.$$ %

To see that property (\ref{P2}) holds, suppose that $ g \in \mathcal{M} $ is non-constant. Note that since $\xi=f^p(\xi)$ it follows from (\ref{eq:conj}) that $ g(\xi) \in \C $ is a fixed point of $ h^p $. Moreover, by a calculation, $ g(\xi) $ is a \emph{repelling} fixed point of $ h^p $, so there exists $j\in\N$ such that $ g(\xi) = \eta_j $. By the continuity of $ g $, there also exists $ i \in \N $ sufficiently large that $ g(\overline{D}_i) \subset K_j. $ Now $a_i=f^{p_i}(a_i)$, and by (\ref{eq:conj}) we have $ g(a_i) = g(f^{p_i}(a_i)) = h^{p_i}(g(a_i)), $ so $ g(a_i) \in K_j$ is a fixed point of $ h^{p_i} $; in other words, there exists $k\in\N$ such that $g(a_i) = b_k$. Thus $ g \in P_{i,j, k} $. 

Finally we show that property (\ref{P1}) also holds. Fix $i,j,k \in \N$, and assume that $ g, \tilde{g} \in P_{i,j,k}. $  Define $ a_{i,n} = F^n(a_i) \in D_i$ and $b_{k,n} = H_j^n(b_k) \in K_j$, for $n \in \N\cup\{0\}.$ % Then $ \lim_{n \to \infty} a_{i,n} = \xi. $ 
We claim that $ g(a_{i,n}) = b_{k,n} $, for $ n \in \N \cup \{ 0 \}. $ This is certainly true for $ n = 0 $, so suppose it is true for $ 0 \leq n \leq m-1 $, for some $ m \geq 1. $  The point $ z = a_{i,m} $ is the unique solution in $ D_i $ of the equation $ f^p(z) = a_{i,m-1} $, so
\[ b_{k,m-1} = g(a_{i,m-1}) = g(f^p(a_{i,m})). \]
Hence, by (\ref{eq:conj}), we have $ b_{k,m-1} =  h^p(g(a_{i,m})). $  
Also, the point $ w = b_{k,m} $ is the unique solution in $ K_j $ of the equation $ h^p(w) = b_{k,m-1} $. We deduce that $ g(a_{i,m}) = b_{k,m}, $ which proves the claim. 

The same argument can be applied to $ \tilde{g} $, so we have $ g(a_{i,n}) = \tilde{g}(a_{i,n}) $ for all $ n \in \N. $  Since $ g $ and $ \tilde{g} $ are meromorphic, and $ \lim_{n \to \infty} a_{i,n} = \xi $ is finite, we have $ g \equiv \tilde{g} $ by the identity principle.  Thus $ P_{i,j,k} $ indeed contains at most one element.%, as required.
\end{proof}

%
%
%%%%%%%%%%
%
%%%%%%%%%%
%
%
\section{Proof of Theorem~\ref{theo:onlytwo}}
\label{pf:onlytwo}
In this section we prove Theorem~\ref{theo:onlytwo} by giving two examples of {\tmf}s that are minimally permuting. The function in the first example has a simple form and the proof uses only elementary arguments. The second example involves a more complicated function and the proof uses a result from value distribution theory, but is very short. It seems worthwhile to give both examples. 

For the first example, let $f$ be given by $$f(z) := \frac{1}{z} e^z + z.$$ This function has no fixed points, and so permutes with no constant functions. Moreover, by an application of Theorem~\ref{theo:perm} and an elementary calculation, it can be shown that $f$ permutes with no rational maps apart from the identity map. 

Let $ g $ be a {\tmf} that permutes with $f$; we need to show that $ g = f $. Note that zero is the only pole of $f$, and it is of order one. It follows from (\ref{eq:perm}) that zero is the only pole of $g$. Let $m\in\N$ denote the order of this pole of $ g $.

Let $\zeta$ be a zero of $f$, and let its order be $n\in\N$. Then $\zeta$ is a pole of $g \circ f$ of order $mn$. It follows from (\ref{eq:perm}) that $\zeta$ is a pole of $f \circ g$ of order $mn$, and so $\zeta$ is a zero of $g$ of order $mn$. Similarly, let $\zeta'$ be a zero of $g$, and let its order be $n'\in\N$. Then $\zeta'$ is a pole of $f \circ g$ of order $n'$. It follows from (\ref{eq:perm}) that $\zeta'$ is a pole of $g \circ f$ of order $n'$, and so $\zeta'$ is a zero of $f$ of order $n'/m$.

We deduce that $g/(f)^m$ is a meromorphic function in $\C$ with no poles or zeros, and thus there exists an entire function $H$, with no zeros, such that
\begin{equation}
\label{geq}
g(z) = f(z)^m H(z), \qfor z \in \C.
\end{equation}
\indent We now show that $m=1$ and $H(0) = 1$. To achieve this we consider the zeros of $f$ of large modulus. These points are the solutions of $e^z = -z^2.$  Suppose that $z = x + iy$ is such a point. Since $e^x = |e^z| = |z|^2 = x^2 + y^2$, it follows first that $x$ is large and positive, and then that $y$ is close to $\pm e^{x/2}$. Thus $\arg z$ is close to $\pm \pi/2$ and $y$ is close to a large positive or negative even multiple of $\pi.$   We deduce that, for large positive or negative values of $ n $, there are zeros of $f$ close to the points $\zeta_n := 2 \log{(2|n|\pi)} + 2n\pi i$; we label the corresponding zeros $z_n$. It can be~shown~that 
\begin{equation}
\label{ztends}
z_n - \zeta_n \rightarrow 0 \text{ as } |n|\rightarrow\infty.
\end{equation}

Now there is a neighbourhood of infinity, $U$ say, in which $f$ has an inverse branch, $F$ say, that maps $U$ to a neighbourhood of the origin. Since $f(z) = 1/z + 1 + O(|z|)$ as $z\rightarrow 0$, we have that $F(z) = 1/z + 1/z^2 + O(|z|^{-3})$ as $z \rightarrow \infty$. Let $n_0 \in \N$ be sufficiently large that $z_n \in U$, for $|n| \geq n_0$. Then, by (\ref{eq:perm}) and (\ref{geq}), we have %, we have % that $g(z) =0$ if and only if $f(z) = 0$, we have
\begin{equation}
\label{diffe3}
0 = g(z_n) = g(f(F(z_n))) = f(g(F(z_n))) = f(z_n^mH(F(z_n))), \qfor |n|\geq n_0.
\end{equation}
In other words, $z_n^mH(F(z_n))$ is a zero of $f$, for each $ n $ such that $|n| \geq n_0$. Hence for each such $n$ there exists $p_n \in \Z$ such that 
\begin{equation}
\label{zeq}
z_{p_n} = z_n^mH(F(z_n)).
\end{equation}
Now
\begin{equation}
\label{zeros}
H(F(z)) = H(0) + H'(0)/z + O(|z|^{-2}) \text{ as } z\rightarrow\infty.
\end{equation}
We deduce, by (\ref{ztends}) and (\ref{zeros}), that 
\begin{equation}
\label{zetaeq}
\zeta_{p_n} \sim \zeta_n^m H(0) \text{ as } |n| \rightarrow \infty.
\end{equation}

It is easy to see that $|p_n| \rightarrow \infty$ as $|n|\rightarrow\infty$. Since $\arg \zeta_n \rightarrow \pm \pi/2$ as $|n|\rightarrow \infty$, it follows from (\ref{zetaeq}) that $H(0) = \pm i^{1-m}|H(0)|$.   Therefore, as $|n|\rightarrow\infty$,
\begin{align*}
\zeta_{p_n} \sim \zeta_n^m H(0) & = (2n\pi i)^m\left(1 - \frac{i \log (2|n|\pi)}{n\pi}\right)^m H(0) \\
& \sim \pm (2n\pi)^m \left(\frac{m\log (2|n|\pi)}{n\pi} + i\right)|H(0)|.
\end{align*}
Since $\zeta_{p_n}$ also satisfies $\operatorname{Im } \zeta_{p_n} = \pm \exp(\operatorname{Re } \zeta_{p_n}/2)$, it follows first that $m=1$, then that $\operatorname{Im } \zeta_{p_n} $ and $\operatorname{Im } \zeta_n $ have the same sign, and finally that $H(0) = 1$. 

Thus, for $ |n| $ sufficiently large, we have $p_n = n$ and hence $ H(F(z_n)) = 1 $, by (\ref{zeq}).  Since the points $ F(z_n) $ accumulate at the origin, and $H(0) = 1$, it follows by the identity principle that $ H(z) \equiv 1 $, and this completes the proof that $ f $ is minimally permuting. \\

For the second example, let $q \in \N$ be greater than $16$, and let $p_1, p_2, \ldots, p_q$ be the zeros of the polynomial given in  \cite[Theorem~1]{MR1697581} (the same result was proved independently in \cite{MR1700781}). We then set 
$$
h(z) := \frac{e^z}{\prod_{j=1}^q (z-p_j)}  +  z.
$$
Then $h$ has no fixed points, and it can be shown using Theorem~\ref{theo:perm} that $h$ permutes with no rational maps apart from the identity. We denote the set of poles of $h$ by $S := \{p_1, \ldots, p_q\}$.

Suppose that $g$ is a {\tmf} that permutes with $h$. Then $S$ is also the set of poles of $g$ by (\ref{eq:perm}). Suppose that $z \in g^{-1}(S)$. Then (\ref{eq:perm}) also gives that $g(h(z)) = \infty$, so $z \in h^{-1}(S)$, and we deduce that $g^{-1}(S) = h^{-1}(S)$. It then follows from \cite[Theorem~1]{MR1697581} that $h = g$. Thus $h$ is indeed minimally permuting.

\subsection*{Acknowledgment.}
The authors are grateful to the referee for helpful comments.

\end{document}